\g@addto@macro{\UrlBreaks}{\UrlOrds}
\providecommand{\noopsort}[1]{} 
\newtheorem{Th}{Theorem}[section]
\newtheorem{Prop}[Th]{Proposition}
\theoremstyle{definition}
\newtheorem{Remark}[Th]{Remark}
\newtheorem{Cor}[Th]{Corollary}
\newcommand{\beq}{\begin{equation}}
\newcommand{\eeq}{\end{equation}}
\def\scalar(#1,#2){(#1\mid#2)}
\newcommand{\un}{\underline}
\newcommand{\raz}{\mathbbm{1}}
\newcommand{\cf}{{\cal F}}
\newcommand{\ov}{\overline}
\newcommand{\Z}{{\mathbb{Z}}}
\newcommand{\N}{{\mathbb{N}}}
\newcommand{\vep}{\varepsilon}
\newcommand{\mob}{\boldsymbol{\mu}}
\newcommand{\lio}{\boldsymbol{\lambda}}
\newcommand{\Emp}{\mathcal{E}}
\newcommand{\logEmp}{\Emp^{\rm log}}
\title{Sarnak's conjecture implies the Chowla conjecture along a subsequence}
\author{A.\ Gomilko\thanks{Research supported by Narodowe Centrum Nauki grant DEC-2014/13/B/ST1/03153}\\
Faculty of Mathematics and Computer Science\\
Nicolaus Copernicus University\\
Toru\'n, Poland\\\url{gomilko@mat.umk.pl}  \and D.\ Kwietniak\thanks{Research supported by Narodowe Centrum Nauki grant  UMO-2012/07/E/ST1/00185.}\\
Faculty of Mathematics and Computer Science\\
Jagiellonian University in Krak\'ow\\
Poland\\
and\\
Institute of Mathematics\\
Federal University of Rio de Janeiro\\
Brazil\\
\url{dominik.kwietniak@uj.edu.pl}
 \and M.\ Lema\'nczyk\thanks{Research supported by Narodowe Centrum Nauki grant  UMO-2014/15/B/ST1/03736.}\\
 Faculty of Mathematics and Computer Science\\
Nicolaus Copernicus University\\
Toru\'n, Poland\\\url{mlem@mat.umk.pl}
}
\date{\today}
\begin{document}

\maketitle

\begin{abstract} We show that the M\"obius disjointess of zero entropy dynamical systems implies the existence of an increasing sequence  of positive integers along which the Chowla conjecture on autocorrelations of the M\"obius function holds.
\end{abstract}

\section{Introduction} For the definitions, basic notation and results concerning the conjecture of Sarnak and the one of Chowla we refer the reader to the survey \cite{Fe-Ku-Le}. We only recall here that the Chowla conjecture implies Sarnak's conjecture \cite{Ab-Ku-Le-Ru}, \cite{Sa}. Then the intriguing question arises whether the reverse implication is true. Motivated by some recent results concerning logarithmic autocorrelations of the classical M\"obius  function $\mob\colon\N\to\{-1,0,1\}$ (and the Liouville function $\lio\colon\N\to\{-1,1\}$), see \cite{Fr}, \cite{Fr-Ho1}, \cite{Ta3}, \cite{Ta} and using Tao's result \cite{Ta} on the equivalence of logarithmic versions of Sarnak's and Chowla conjectures, we give a partial answer to the aforementioned question by showing that:

\begin{Th}\label{t:sartocho}
If Sarnak's conjecture is satisfied then there exists an increasing sequence $(N_k)$ of positive integers along which the Chowla conjecture holds.
\end{Th}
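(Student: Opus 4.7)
My plan is to pass through the logarithmic versions of the two conjectures and then return to the Cesàro setting by a soft functional-analytic argument.

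First, I would observe that Sarnak's conjecture in its Cesàro form implies its logarithmic version by routine partial summation: for every zero-entropy topological system $(Y,S)$, $y\in Y$ and $g\in C(Y)$, $(\log N)^{-1}\sum_{n\le N} g(S^n y)\mob(n)/n\to 0$. Invoking Tao's theorem on the equivalence of logarithmic Sarnak and logarithmic Chowla, combined with the unconditional Mirsky asymptotics for squarefree tuples, I would deduce that the logarithmic empirical measures
\[
\logEmp_N \;:=\; \frac{1}{\log N}\sum_{n=1}^N \frac{1}{n}\,\delta_{\sigma^n\mob}
\]
converge weak-$*$ on $\{-1,0,1\}^{\mathbb{Z}}$ to the Chowla measure $\nu_C$.

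Next, I would compare $\logEmp_N$ with the Cesàro empirical measures $\Emp_N:=N^{-1}\sum_{n\le N}\delta_{\sigma^n\mob}$. A direct Abel summation in the space of measures yields
\[
\logEmp_N \;=\; \frac{\Emp_N}{\log N} \;+\; \frac{1}{\log N}\sum_{n=1}^{N-1}\frac{\Emp_n}{n+1},
\]
whose first summand vanishes in total variation. Testing against an arbitrary $f\in C(\{-1,0,1\}^{\mathbb{Z}})$, the value $\int f\,d\nu_C$ appears as a logarithmic average of the bounded sequence $\int f\,d\Emp_n$, and hence lies in $[\liminf_n\int f\,d\Emp_n,\limsup_n\int f\,d\Emp_n]$. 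Varying $f$ and invoking Hahn--Banach, this forces $\nu_C\in\overline{\mathrm{conv}}(K)$, where $K$ is the (compact) set of weak-$*$ accumulation points of $(\Emp_N)$. Moreover $K\subseteq\mathcal{M}_\sigma$, the set of $\sigma$-invariant probability measures on $\{-1,0,1\}^{\mathbb{Z}}$, since $\|\sigma_*\Emp_N-\Emp_N\|_{\mathrm{TV}}=O(1/N)$.

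For the final step I would use the ergodicity of $\nu_C$: being ergodic, $\nu_C$ is an extreme point of $\mathcal{M}_\sigma$, and \emph{a fortiori} of the subset $\overline{\mathrm{conv}}(K)\subseteq\mathcal{M}_\sigma$. Milman's converse to the Krein--Milman theorem then forces every extreme point of $\overline{\mathrm{conv}}(K)$ to belong to the compact generating set $K$; in particular $\nu_C\in K$. Extracting a subsequence $(N_k)$ with $\Emp_{N_k}\to\nu_C$ weak-$*$ gives exactly the Chowla conjecture along $(N_k)$. The heavy lifting is done by Tao's equivalence theorem; the remainder is essentially soft. The main subtlety I expect is to unpack Tao's result together with the Mirsky-type information about squarefree correlations into the clean weak-$*$ convergence $\logEmp_N\to\nu_C$; once this is available, Abel summation and Milman's theorem close the proof.
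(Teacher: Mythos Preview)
Your proposal is correct and follows essentially the same route as the paper: reduce to the logarithmic Chowla conjecture via Tao's equivalence, use Abel summation to place the logarithmic limit measure in $\overline{\mathrm{conv}}(V(\mob))$, and then invoke Milman's theorem together with the ergodicity of the Chowla measure $\hat\nu_{\mob^2}$ to conclude it already lies in $V(\mob)$. The paper packages the Abel-summation step and the Milman step as a general proposition $V^{\rm log}(x)\subset\overline{\mathrm{conv}}(V(x))$ and a corollary $V^{\rm log}(x)\cap M^e(X,T)\subset V(x)$, but the argument is the same as yours.
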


As a matter of fact, we show that the assertion of Theorem~\ref{t:sartocho} follows from the logarithmic version of the Chowla conjecture.

\section{Ces\`aro and harmonic limits of empirical measures} \subsection{Ergodic components of members of $V(x)$ and $V^{\rm log}(x)$}
Let $X$ be a compact metric space. By $M(X)$ we denote the space of probability Borel measures on $X$, in fact, we will also consider the space of $\widetilde{M}(X)$ of Borel measures $\mu$ on $X$
for which $\mu(X)\leq 2$. With the weak-$\ast$-topology, $M(X)$ is a compact metrizable space and the metric we will consider is given by
\beq\label{ee1}
d(\mu,\nu)=\sum_{j\geq1}\frac1{2^j}\left|\int f_j\,d\mu-\int f_j\,d\nu\right|,
\eeq
where $\{f_j:\:j\geq1\}$ is a linearly dense set of continuous function whose sup norm is $\leq1$. Note that, by~\eqref{ee1}, if $0\leq\alpha\leq1$ then
$d(\alpha\mu,\alpha\nu)=\alpha d(\mu,\nu)$. In particular, $d$ is convex:
\beq\label{ee2}
d\left(\int\mu_\gamma\,dP(\gamma) ,\int \nu_\gamma\,dP(\gamma)\right)\leq \int d(\mu_\gamma,\nu_\gamma)\,dP(\gamma),\eeq
where $P$ is a Borel probability measure on the set of indices $\gamma$ in some Polish metric space, $\gamma\mapsto\mu_\gamma$ is an $M(X)$-valued Borel function, and $\int\mu_\gamma\,dP(\gamma)$ denotes the Pettis integral.

Let $(X,T)$ be a dynamical system given by a continuous map $T\colon X\to X$ and $M(X,T)$ (respectively, $M^e(X,T)$) stands for the set of $T$-invariant (respectively, ergodic) measures. Given $x\in X$ and $n\in\mathbb{N}$, we write $\delta_{T^n(x)}$ for the Dirac measure concentrated at the point $T^n(x)$. Let $\Delta(x,N)$ denote the counting measure concentrated on $\{x,T(x),\ldots,T^{N-1}(x)\}$, where $N\ge 1$ and
let the \emph{empirical measure} $\Emp(x,N)$  be the normalized counting measure, that is,
\[
\Delta(x,N)=\sum_{n=1}^N \delta_{T^{n-1}(x)}, \qquad\text{and}\qquad \Emp(x,N)=\frac1N\Delta(x,N).
\]
We say that $x\in X$ is quasi-generic for a measure $\nu\in M(X)$ if for some subsequence $(N_k)$ we have $\Emp(x,N_k)\to\nu$.
The Ces\`aro limit set of $x$ is
$$
V(x):=\{\nu\in M(X):\:x\text{ is quasi-generic for }\nu\}.$$
The set was studied by many authors and it is well-known (and easy to see) that we always have
\beq\label{e1}
V(x)\subset M(X,T)\eeq
and $V(x)$ is a nonempty, closed and connected set, see \cite{De-Gr-Si}, hence
\beq\label{e2}
\mbox{either $|V(x)|=1$ or $V(x)$ is uncountable.}\eeq
Choosing different normalization method for the counting measures $\Delta(x,N)$, we arrive at the notion of harmonic limit set of a point.
Let
$$\logEmp(x,N)=\frac1{\log N}\sum_{n=1}^N\frac1n\delta_{T^{n-1}(x)}, \qquad\text{for }N\ge 2.
$$
Note that $\logEmp(x,N)$ is not a probability measure. In order to stay in $M(X)$, we should consider
$$
\Emp^{\rm log}_{\rm nrm}(x,N)=\frac1{H_N}\sum_{n=1 }^N\frac1n \delta_{T^{n-1}(x),}\text{ where }H_N=\sum_{n=1}^N\frac1n.$$
Note that the limit sets of sequences $(\logEmp(x,N))$ and $(\Emp^{\rm log}_{\rm nrm}(x,N))$ as $N$ goes to $\infty$ coincide, so by abuse of notation  we will not distinguish between $M(X)$ and $\widetilde{M}(X)$ and we will often deal with sequences of linear combinations of measures which are not exactly convex (affine), but are closer and closer to be so when we pass with the index $N$ to infinity. We say that $x\in X$ is logarithmically quasi-generic for a measure $\nu$ if for some subsequence $(N_k)$ we have
$\logEmp(x,N_k)\to\nu$ as $k\to\infty$.
The harmonic limit set of $x$ is defined as
$$
V^{\rm log}(x):=\{\nu\in M(X):\:x\text{ is logarithmically quasi-generic for }\nu\}.$$

It is easy to see that the proofs of \eqref{e1}--\eqref{e2} presented for $V(x)$ in \cite{De-Gr-Si} can be easily adapted to harmonic averages, so
we have that $V^{\rm log}(x)$ is nonempty, closed, connected, and consists of $T$-invariant measures. In particular, \eqref{e2} also holds for $V^{\rm log}(x)$. If $V(x)=\{\nu\}$, then $V^{\rm log}(x)=\{\nu\}$, but the converse need not be true. Nevertheless, the measures in the harmonic limit set of $x$ are always members of the closed convex hull of $V(x)$ (note that the latter set need not be convex as in a dynamical system with the specification property every nonempty compact and connected set $V\subset M(X,T)$ is the Ces\`aro limit set of some point $x\in X$, cf.\ \cite[Proposition 21.14]{De-Gr-Si}).

\begin{Prop}\label{p:col1}  We have
$V^{\rm log}(x)\subset\ov{{\rm conv}}(V(x))$.\end{Prop}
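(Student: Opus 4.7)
The plan is to use the support-function description of closed convex hulls: a measure $\nu\in M(X)$ lies in $\ov{{\rm conv}}(V(x))$ if and only if $\int f\,d\nu \le \sup_{\mu\in V(x)}\int f\,d\mu$ for every $f\in C(X,\R)$. (Necessity is trivial; sufficiency is the standard Hahn--Banach separation in the weak-$\ast$-topology on $M(X)\subset C(X)^*$, whose closed half-spaces are exactly $\{\mu:\int f\,d\mu\le c\}$.) So fix $\nu\in V^{\rm log}(x)$ with $\logEmp(x,N_k)\to\nu$ and an arbitrary $f\in C(X,\R)$; I aim to show $\int f\,d\nu \le \alpha$, where $\alpha := \limsup_{n\to\infty}\int f\,d\Emp(x,n)$.

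The key computation is an Abel summation. Using $\delta_{T^{n-1}x}=\Delta(x,n)-\Delta(x,n-1)$ and $\Delta(x,n)=n\,\Emp(x,n)$, partial summation gives the identity
\[
\logEmp(x,N)\;=\;\frac{\Emp(x,N)}{\log N}\;+\;\frac{1}{\log N}\sum_{n=1}^{N-1}\frac{1}{n+1}\,\Emp(x,n),
\]
which exhibits $\logEmp(x,N)$ as (up to a vanishing boundary term) an almost-convex combination of the Cesàro empirical measures on all smaller scales, the weights $1/((n+1)\log N)$ concentrating on long scales since their sum is $\sim 1$. Integrating $f$ and writing $a_n=\int f\,d\Emp(x,n)$, it suffices to check that $\limsup_N\bigl(a_N/\log N+(\log N)^{-1}\sum_{n=1}^{N-1}a_n/(n+1)\bigr)\le\alpha$. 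Given $\vep>0$, pick $M$ with $a_n\le\alpha+\vep$ for all $n\ge M$: the boundary term and the initial block $n<M$ contribute at most $\|f\|_\infty(1+H_M)/\log N\to 0$, while the tail satisfies $(\log N)^{-1}\sum_{n=M}^{N-1}a_n/(n+1)\le(\alpha+\vep)(1+o(1))$, using the obvious bound on $\sum_{n=M}^{N-1}1/(n+1)$ from above when $\alpha+\vep\ge 0$ and from below when $\alpha+\vep<0$.

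Once this limsup estimate is in hand, compactness of $M(X)$ furnishes a subsequence $n_j\to\infty$ with $\Emp(x,n_j)\to\mu$ and $a_{n_j}\to\alpha$; by definition $\mu\in V(x)$ and $\int f\,d\mu=\alpha$. Passing to the limit along $(N_k)$ on the left-hand side yields $\int f\,d\nu\le\alpha\le\sup_{\mu\in V(x)}\int f\,d\mu$, and since $f$ was arbitrary the Hahn--Banach criterion gives $\nu\in\ov{{\rm conv}}(V(x))$. I do not foresee a real obstacle: the entire argument is a careful Abel-summation computation, and the only delicate point is the bookkeeping of the tail estimate according to the sign of $\alpha+\vep$. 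The paper's own observation that $\logEmp$ and its probability-normalised variant $\Emp^{\rm log}_{\rm nrm}$ share the same limit sets (because $H_N/\log N\to 1$) means one may work with whichever normalisation is convenient.
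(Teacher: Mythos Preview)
Your argument is correct, and at its heart it is the same as the paper's: both proofs rest on the Abel summation identity
\[
\logEmp(x,N)=\frac{\Emp(x,N)}{\log N}+\frac{1}{\log N}\sum_{n=1}^{N-1}\frac{\Emp(x,n)}{n+1},
\]
which expresses $\logEmp(x,N)$ as an (asymptotically) convex combination of the Ces\`aro empirical measures, and both then discard a short initial block before exploiting that $\Emp(x,n)$ is eventually close to $V(x)$. The only real difference is in the wrapping of the final step. The paper works measure-by-measure: using the convexity inequality~\eqref{ee2} for the explicit metric $d$, it shows that the tail combination is within $\vep$ of $\ov{\rm conv}(V(x))$ directly. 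You instead test against each $f\in C(X,\R)$, reduce to a scalar limsup estimate, and then invoke the Hahn--Banach/support-function characterisation of the closed convex hull. Your route avoids introducing a specific metric and its convexity property, at the cost of appealing to a separation theorem; the paper's route is self-contained once the metric is fixed. Neither is materially shorter, and the sign bookkeeping you flag in the tail estimate is genuinely the only place requiring care.
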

\begin{proof}
Fix $x\in X$ and let
$$
A:=\{\Emp(x,N):\:N\geq1\},\;\;B:=\{\logEmp(x,N):\:N\geq2\}.$$
If $x$ is eventually periodic, that is, $T^k(x)=T^\ell(x)$ for some $0\leq k<\ell$, then it is easy to see that $V(x)=V^{\rm log}(x)=\{\Emp(T^k(x),\ell-k)\}$.
We will assume that $x$ is not an eventually periodic point. It follows that $A\cap M(X,T)=B\cap M(X,T)=\emptyset$. Furthermore, $\ov{A}=V(x)\cup A$ and $\ov{B}=V^{\rm log}(x)\cup B$, where the summands are disjoint in both cases. Note that
\[
\Delta(x,n)-\Delta(x,n-1)=\delta_{T^{n-1}(x)} \qquad\text{ for }n=2,3,\ldots.
\]
Using the summation by parts trick, we obtain
\begin{multline}\label{eq:conv}
\sum_{n=1}^N\frac1n\delta_{T^{n-1}(x)}=\Delta(x,1)+\sum_{n=2}^N\frac1n(\Delta(x,n)-\Delta(x,n-1))=\\
\frac1N\Delta(x,N)+\sum_{n=1}^{N-1}\left(\frac1{(n+1)n}\right)\Delta(x,n)=
\Emp(x,N)+\sum_{n=1}^{N-1}\frac{\Emp(x,n)}{n+1}.
\end{multline}


Fix $\vep>0$. Then, there exists $K=K_\vep\geq1$ such that
\beq\label{ee5}d(\Emp(x,n),V(x))<\vep\quad\text{for } n\geq K_\vep.\eeq

Using \eqref{eq:conv}, we get
\beq\label{eq:sum}
\Emp^{\rm log}_{\rm nrm}(x,N)=\frac1{H_N}\sum_{n=1}^K\frac{\Emp(x,n)}{n+1}+\frac{\Emp(x,N)}{H_N}+\frac1{H_N}\sum_{n=K+1}^{N-1}\frac{\Emp(x,n)}{n+1}
 \eeq
%
Now, keeping $K$ fixed, we can assure that the total mass of the first two summands on the RHS of \eqref{eq:sum} is as close to~0 as we want provided that $N$ is large enough. Therefore, for every $N$ large enough, the measure
$\Emp^{\rm log}_{\rm nrm}(x,N)$ is $\vep$-close to
$$
\xi_N=\frac1{H_N-H_K}\sum_{n=K+1}^{N-1}\frac{\Emp(x,n)}{n+1}.
$$
The latter measure is an affine combination of $\Emp(x,n)$ for $n>K_\vep$ which are all $\vep$-close to $V(x)$. Using \eqref{ee2},  we get that $\xi_N$ is $\vep$-close to $\ov{\rm conv}(V(x))$, thus $d(\xi_N,\Emp^{\rm log}_{\rm nrm}(x,N))<2\vep$ for all $N$ large enough.
Putting all this together, we obtain that for each $\rho\in V^{\rm log}(x)$, we have
$$
d(\rho,\ov{\rm conv}(V(x)))=0$$
which completes the proof of Proposition~\ref{p:col1}.\end{proof}





\subsection{Ergodic measures in $V^{\rm log}(x)$}

It turns out that if $x\in X$ is logarithmically quasi-generic for an ergodic measure then $x$ is also quasi-generic in the classical (Ces\`aro) sense.

\begin{Cor}\label{c:milman} If an ergodic measure $\nu\in V^{\rm log}(x)$, then $\nu\in V(x)$.\end{Cor}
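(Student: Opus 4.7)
The label of the corollary (\texttt{c:milman}) strongly suggests the intended route: invoke Milman's partial converse to the Krein--Milman theorem. The plan is to combine Proposition~\ref{p:col1} with the fact that ergodic measures are extreme points of $M(X,T)$.

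First I would recall the setup: by Proposition~\ref{p:col1} we have
\[
V^{\rm log}(x)\subset\ov{\rm conv}(V(x)),
\]
and since $M(X)$ is compact metrizable while $V(x)$ is a closed (hence compact) subset of $M(X,T)$, the set $K:=\ov{\rm conv}(V(x))$ is a compact convex subset of $M(X,T)$. Since $\nu\in V^{\rm log}(x)$, we therefore have $\nu\in K$.

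Next, recall that a measure $\nu\in M(X,T)$ is ergodic if and only if it is an extreme point of the convex set $M(X,T)$. In particular, if $\nu$ lies in any convex subset $K\subset M(X,T)$, then $\nu$ is automatically an extreme point of $K$: any nontrivial convex decomposition $\nu=\tfrac12(\mu_1+\mu_2)$ with $\mu_1,\mu_2\in K\subset M(X,T)$ would already contradict ergodicity. Applying this to $K=\ov{\rm conv}(V(x))$ shows that $\nu$ is an extreme point of $K$.

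Finally I would invoke Milman's theorem: if a compact convex set $K$ in a locally convex space is the closed convex hull of a set $S$, then every extreme point of $K$ belongs to $\ov S$. Applied to $S=V(x)$ (which is already closed), this yields $\nu\in V(x)$, completing the proof. The only real ``obstacle'' is a bookkeeping one, namely making sure that the closed convex hull in question is taken inside the compact metrizable space $M(X)$ so that Milman's theorem applies; once this is checked, the argument is essentially a one-line application of the extreme-point characterization of ergodicity together with Milman's theorem.
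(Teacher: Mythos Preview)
Your argument is correct and follows exactly the paper's route: combine Proposition~\ref{p:col1} with the fact that ergodic measures are extreme in $M(X,T)$ (hence in any convex subset containing them), then apply Milman's theorem to $V(x)$. The paper's proof is the same chain of inclusions, only stated more tersely.
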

\begin{proof}
We first recall Milman's theorem (\cite{Ru}, Chapter 1.3, Theorem 3.25): {\em If $K$ is a compact set in a locally convex space and $\ov{\rm conv}(K)$ is compact then ${\rm ex}(\ov{\rm conv}(K))\subset K$.} Here, and elsewhere by ${\rm ex}(L)$ we denote the set of extreme points of a convex set $L$.
We apply Milman's result to $K=V(x)$ and ${\rm ex}(M(X,T))=M^e(X,T)$. By Proposition~\ref{p:col1},
we have
$$
V^{\rm log}(x)\cap M^e(X,T)\subset \ov{\rm conv}(V(x))\cap M^e(X,T)\subset
{\rm ex}(\ov{\rm conv}(V(x)))\subset V(x).\qedhere$$ \end{proof}


\subsection{Ergodic components of measures in $V^{\rm log}(x)$}
Let us recall that in our setting $M(X,T)$ is a metrizable, compact and convex subset of a locally convex space. It follows that we can use Choquet's representation theorem (see \cite{Ph}, Chapters 3 and 10) to conclude that if $L$ is a closed convex subset of $M(X,T)$ and $\kappa\in L$ then there exists a Borel probability measure $P_\kappa$ on $M(X,T)$ supported by ${\rm ex}(L)$ such that
\beq\label{eq:barycenter}
\kappa=\int_{{\rm ex}(L)}\rho\,dP_\kappa(\rho).\eeq
Furthermore, if $L=M(X,T)$, then $P_\kappa$ satisfying \eqref{eq:barycenter} is unique and the map $M(X,T)\ni \kappa\mapsto P_\kappa\in M(M^e(X,T))$ is Borel measurable \cite{Do} Fact A.2.12 and \cite{Ph} Prop.\ 11.1.

\begin{Prop}\label{p:logergcomp}
The set of ergodic components of measures in $V^{\rm log}(x)$ is contained in the set of ergodic components of measures in $V(x)$. More precisely: if a Borel set $\mathcal{D}\subset M^e(X,T)$ satisfies
$$
P_\mu(\mathcal{D})=1\text{ for each }\mu\in V(x)$$
then
$$
P_\kappa(\mathcal{D})=1\text{ for each }\kappa\in V^{\rm log}(x).$$

In particular, if the set of ergodic components of measures in $V(x)$ is countable, so is the set of ergodic components of $V^{\rm log}(x)$.\end{Prop}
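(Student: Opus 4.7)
The plan is to combine Proposition~\ref{p:col1} with the uniqueness of Choquet's representation by ergodic measures. Given $\kappa\in V^{\log}(x)$, by Proposition~\ref{p:col1} we have $\kappa\in\overline{\rm conv}(V(x))$. Since $V(x)$ is a closed subset of the compact metrizable convex set $M(X,T)$, the set $L:=\ov{\rm conv}(V(x))$ is compact, convex, and metrizable, so Choquet's theorem applies on $L$: there is a Borel probability measure $Q_\kappa$ on $L$, concentrated on ${\rm ex}(L)$, such that $\kappa=\int_{{\rm ex}(L)}\nu\,dQ_\kappa(\nu)$. By Milman's theorem (recalled in the proof of Corollary~\ref{c:milman}), ${\rm ex}(L)\subset V(x)$, so $Q_\kappa$ is actually supported on $V(x)$.

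Next I would plug the ergodic decomposition of each $\nu\in V(x)$ into the above representation. Since $M(X,T)\ni\nu\mapsto P_\nu\in M(M^e(X,T))$ is Borel measurable by the result cited before the proposition, I can define the Borel probability measure
\[
\widetilde P_\kappa:=\int_{V(x)}P_\nu\,dQ_\kappa(\nu)\in M(M^e(X,T)).
\]
Using Fubini (together with the fact that evaluation of a continuous function on $X$ against a measure is a bounded Borel function on $M(X)$), one checks that
\[
\int_{M^e(X,T)}\rho\,d\widetilde P_\kappa(\rho)=\int_{V(x)}\Bigl(\int_{M^e(X,T)}\rho\,dP_\nu(\rho)\Bigr)dQ_\kappa(\nu)=\int_{V(x)}\nu\,dQ_\kappa(\nu)=\kappa.
\]
Thus $\widetilde P_\kappa$ is a Borel probability measure on $M^e(X,T)$ whose barycenter is $\kappa$. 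By uniqueness of the Choquet representation on $M(X,T)$ itself, we conclude $P_\kappa=\widetilde P_\kappa$.

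With this identity in hand the conclusion is immediate: if $\mathcal{D}\subset M^e(X,T)$ is Borel and $P_\nu(\mathcal{D})=1$ for every $\nu\in V(x)$, then
\[
P_\kappa(\mathcal{D})=\int_{V(x)}P_\nu(\mathcal{D})\,dQ_\kappa(\nu)=1.
\]
For the ``in particular'' part, if the set $\mathcal{D}_0$ of ergodic components of measures in $V(x)$ is countable (hence automatically Borel), then $\mathcal{D}_0$ meets the hypothesis and so $P_\kappa(\mathcal{D}_0)=1$ for every $\kappa\in V^{\log}(x)$, which says precisely that the ergodic components of all measures in $V^{\log}(x)$ lie in the same countable set.

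The main technical point I expect to verify carefully is the identity $P_\kappa=\int_{V(x)}P_\nu\,dQ_\kappa(\nu)$; it hinges on Borel measurability of $\nu\mapsto P_\nu$ (so the integral even makes sense as a Pettis/barycentric integral into $M(M^e(X,T))$) and on uniqueness of the Choquet measure on $M(X,T)$, which is why the statement would fail without passing through $L\subset M(X,T)$ and then lifting via Milman. Everything else is a routine application of Fubini and the results already recorded in the paper.
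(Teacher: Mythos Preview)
Your proposal is correct and follows essentially the same route as the paper's proof: use Proposition~\ref{p:col1} to place $\kappa$ in $\ov{\rm conv}(V(x))$, apply Choquet's theorem on that convex hull and Milman's theorem to get a representing measure $Q_\kappa$ supported on $V(x)$, then mix the ergodic decompositions $P_\nu$ against $Q_\kappa$ and invoke uniqueness of the Choquet representation on the simplex $M(X,T)$ to identify the result with $P_\kappa$. The paper's $R$ is exactly your $\widetilde P_\kappa$, and the remaining computations are identical.
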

\begin{proof}
Fix $\kappa\in V^{\rm log}(x)$. It follows from Proposition~\ref{p:col1} that $\kappa\in\ov{\rm conv}(V(x))$.
By Choquet's theorem, there exists $Q\in M(M(X,T))$ with $Q({\rm ex}(\ov{\rm conv}(V(x)))=1$ such that
$$
\kappa=\int_{{\rm ex}(\ov{\rm conv}(V(x)))}\mu\,dQ(\mu).$$
Using Milman's theorem, we obtain that $Q(V(x))=1$, so that
$$
\kappa=\int_{V(x)}\mu\,dQ(\mu).$$
Hence
$$
\kappa=\int_{V(x)}\left(\int_{M^e(X,T)}\rho\,dP_\mu(\rho)\right)\,dQ(\mu)=
\int_{M^e(X,T)}\rho\,dR(\rho),
$$
where $R\in M(M(X,T))$ is defined by
$$
R(\mathcal{C}):=\int_{V(x)}P_\mu(\mathcal{C})\,dQ(\mu)\text{ for a Borel subset }\mathcal{C}\subset M(X,T).$$
Note that the definition of $R$ is correct as $V(x)$ is Borel and $\mu\mapsto P_\mu$ is Borel measurable. Now, since $M(X,T)$ is a simplex, there is only one measure on $M^e(X,T)$ satisfying \eqref{eq:barycenter} and we obtain that $R=P_\kappa$. Since $P_{\mu}(\mathcal{D})=1$ for each $\mu\in V(x)$, we have $P_{\kappa}(\mathcal{D})=1$.\end{proof}

\section{Relations between Sarnak's conjecture and the Chowla conjecture}
Thinking of the M\"obius function as of a point $\mob$ in the sequence space $\{-1,0,1\}^\N$, we can consider the M\"obius dynamical system $(X_{\mob},S)$, where $X_{\mob}$ stands for the orbit closure of $\mob$ and $S$ denotes the left shift. 
We now apply results from the previous sections to the M\"obius system and sets $V(\mob)$ and $V^{\rm log}(\mob)$. 

\vspace{2ex}

\begin{proof}[Proof of Theorem~\ref{t:sartocho}.]
It is obvious that Sarnak's conjecture implies logarithmic Sarnak's conjecture which, by a result of Tao \cite{Ta}, implies
the logarithmic version of Chowla conjecture.
The logarithmic version of Chowla conjecture for $\mob$ phrased in the language of ergodic theory means that
$$
\frac1{\log N}\sum_{n=1}^N\frac1n\delta_{S^{n-1}(\mob)}\to \widehat{\nu}_{\mob^2}, \quad \text{as }N\to\infty,$$
where $\widehat{\nu}_{\mob^2}$ is the relatively independent extension of the Mirsky measure $\nu_{\mob^2}$ of the square-free system $(X_{\mob^2},S)$ \cite{Ab-Ku-Le-Ru}. In particular, $\widehat{\nu}_{\mob^2}$ is ergodic. Equivalently, the conjecture says that $\widehat{\nu}_{\mob^2}\in V^{\rm log}(\mob)$. It follows from Corollary~\ref{c:milman} that $\mob$ is a quasi-generic point (in the sense of Ces\`aro) for $\widehat{\nu}_{\mob^2}$, that is, there is a sequence $(N_i)$ such that for each $1\leq a_1<\ldots<a_k$ and each choice of $j_0,j_1,\ldots,j_k\in\{1,2\}$ not all equal to~2, we have
$$
\lim_{i\to\infty}\frac1{N_i}\sum_{n\leq N_i}\mob^{j_0}(n)\mob^{j_1}(n+a_1)\cdot\ldots\cdot\mob^{j_k}(n+a_k)=0,$$
i.e., we obtain the Chowla conjecture along the subsequence $(N_i)$.\end{proof}

\begin{Remark}
If instead of the M\"obius function $\mob$ we consider the Liouville function $\lio$ then the Chowla conjecture claims that the limit is the Bernoulli measure $B(1/2,1/2)$ on $\{-1,1\}^{\N}$.
\end{Remark}

\vspace{2ex}

 In fact, using \cite{Fr}, we have the following:
\begin{Cor}\label{c:Fran}
Assume that there exists an ergodic measure $\kappa\in V^{\rm log}(\mob)$. Then there exists an increasing sequence $(N_i)$ such  that the Chowla conjecture holds along $(N_i)$.\end{Cor}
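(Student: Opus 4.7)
\textbf{Proof plan for Corollary~\ref{c:Fran}.} The strategy is to combine Corollary~\ref{c:milman} with the main result of Frantzikinakis~\cite{Fr} in order to reduce Corollary~\ref{c:Fran} to an immediate variant of the final paragraph of the proof of Theorem~\ref{t:sartocho}. There is really no new dynamical content to produce; the argument is an assembly of two already-available ingredients.

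The first step is to apply Corollary~\ref{c:milman} to the point $x=\mob$ and to the ergodic measure $\kappa\in V^{\rm log}(\mob)$ given by the hypothesis. This yields $\kappa\in V(\mob)$, so by the very definition of the Ces\`aro limit set there exists an increasing sequence $(N_i)$ of positive integers along which
$$
\Emp(\mob,N_i)=\frac1{N_i}\sum_{n=1}^{N_i}\delta_{S^{n-1}(\mob)}\longrightarrow\kappa
$$
in the weak-$\ast$ topology on $M(X_{\mob})$.

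The second step is to invoke the theorem of Frantzikinakis from \cite{Fr}, which asserts that whenever $\mob$ is quasi-generic (in the Ces\`aro sense) along some sequence $(N_i)$ for a measure $\kappa$ that is ergodic, then the Chowla correlations along $(N_i)$ vanish; equivalently, $\kappa$ must coincide with the relatively independent extension $\widehat{\nu}_{\mob^2}$ of the Mirsky measure and $(N_i)$ witnesses the Chowla conjecture. This is where all the arithmetic input enters: Corollary~\ref{c:milman} and Proposition~\ref{p:col1} give only the abstract dynamical information that an ergodic log-quasi-generic measure is already Ces\`aro-quasi-generic, and the identification of that ergodic measure with the Chowla measure is precisely the content supplied by~\cite{Fr}.

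The third step is then purely notational: translating the weak-$\ast$ convergence $\Emp(\mob,N_i)\to\widehat{\nu}_{\mob^2}$ into the language of correlations by testing against the cylinder functions on $X_{\mob}\subset\{-1,0,1\}^{\N}$, as at the end of the proof of Theorem~\ref{t:sartocho}, gives
$$
\lim_{i\to\infty}\frac1{N_i}\sum_{n\le N_i}\mob^{j_0}(n)\,\mob^{j_1}(n+a_1)\cdots\mob^{j_k}(n+a_k)=0
$$
for every $1\le a_1<\ldots<a_k$ and every choice of exponents $j_0,j_1,\ldots,j_k\in\{1,2\}$ not all equal to $2$, which is exactly the assertion of the Chowla conjecture along $(N_i)$.

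The only genuine obstacle is pinpointing and citing the correct formulation from~\cite{Fr}; once that citation is accepted, steps one and three are direct applications of, respectively, Corollary~\ref{c:milman} and the standard ergodic-theoretic reformulation of Chowla's conjecture already used in the proof of Theorem~\ref{t:sartocho}.
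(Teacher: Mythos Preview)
Your proposal is correct and follows essentially the same route as the paper: apply Corollary~\ref{c:milman} to pass from $\kappa\in V^{\rm log}(\mob)$ to $\kappa\in V(\mob)$, then invoke Frantzikinakis~\cite{Fr} to identify the ergodic $\kappa$ with $\widehat{\nu}_{\mob^2}$, and read off the Chowla correlations along the resulting subsequence. The paper's proof is just a terser version of exactly these three steps.
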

\begin{proof} If there exists $\kappa\in V^{\rm log}(\mob)\cap M^e(X_{\mob},S)$, then, reasoning as above, we see that for a subsequence $(N_i)$, we have $$\frac1{N_i}\sum_{1\leq n\leq N_i}\delta_{S^{n-1}(\mob)}\to \kappa\qquad\text{as }i\to\infty.$$ Now, by \cite{Fr}, we get $\kappa=\widehat{\nu}_{\mob^2}$. The result follows.\end{proof}

\begin{Remark} Assume that $(X,T)$ is a dynamical system and $x\in X$  is completely deterministic (i.e.\ each member $\kappa\in V(x)$ yields zero entropy measurable system $(X,\kappa,T)$) such that the ergodic components of all measures from $V(x)$ give a countable set. It follows from Proposition~\ref{p:logergcomp} and \cite{Fr-Ho1} (see Remarks after Theorem~1.3 in \cite{Fr-Ho1}) that, at $x$, we obtain M\"obius disjointness in the logarithmic sense.\end{Remark}

\section{Examples}
We collect here a couple of examples demonstrating that some of our results are optimal and cannot be improved.
Our examples are points in the full shift $\{0,1\}^\N$ or $\{0,1,2\}^\N$ constructed so that the Ces\`aro and harmonic limit sets are easy to identify. We will routinely omit some easy computations used in our proofs.

Let $\un{d}(J)$, $\ov{d}(J)$, $\un{\delta}(J)$, and $\ov{\delta}(J)$ denote, respectively the lower/upper asymptotic and lower/upper logarithmic density of a set $J\subset \Z$. It is well known that we always have $\un{d}(J)\le\un{\delta}(J)\le\ov{\delta}(J)\le\ov{d}(J)$. We write $\delta(J)$ for the common value of $\un{\delta}(J)$ and $\ov{\delta}(J)$ (if such an equality holds).

Our approach is based on the following criterion for logarithmic genericity. It can be proved the same way as for Ces\`aro averages the only difference is that logarithmic (harmonic) averages replace asymptotic averages.
\begin{Prop}
A point $x=(x_n)\in \{0,1\}^\N$ is logarithmically generic for a measure $\mu$ if and only if for every $k \ge 1$ and for every finite block
$w\in\{0,1\}^k$, the set of positions $j$ such that $w$ appears at the position $j$ in $w$, that is
the set $J_w=\{j\in\N: x_{j}=w_1,\ldots, x_{j+k-1}=w_k\}$
satisfies  $\un{\delta}(J_w)=\ov{\delta}(J_w)=\mu(\{y:y_{[0,k)}=w\})$.
\end{Prop}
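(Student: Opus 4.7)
The plan is to run the standard ``block-frequency criterion'' proof for generic points, with logarithmic sums replacing Cesàro sums throughout. First, as observed in the paragraph defining $\Emp^{\rm log}_{\rm nrm}(x,N)$, the sequences $\logEmp(x,N)$ and $\Emp^{\rm log}_{\rm nrm}(x,N)$ have the same weak-$\ast$ limit points, so I will work with the probability measures $\Emp^{\rm log}_{\rm nrm}(x,N)\in M(\{0,1\}^{\N})$; the assertion ``$x$ is logarithmically generic for $\mu$'' then becomes $\Emp^{\rm log}_{\rm nrm}(x,N)\to\mu$ in the weak-$\ast$ topology.

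The first step is to reduce weak-$\ast$ convergence to convergence tested against cylinder indicators. On the Cantor space $\{0,1\}^{\N}$ every cylinder $[w]=\{y:y_1=w_1,\ldots,y_k=w_k\}$ is clopen, so $\mathbbm{1}_{[w]}$ is continuous, and by Stone--Weierstrass the linear span of such indicators is uniformly dense in $C(\{0,1\}^{\N})$. Since the measures $\Emp^{\rm log}_{\rm nrm}(x,N)$ and $\mu$ all have total mass one, weak-$\ast$ convergence is therefore equivalent to
\[
\int \mathbbm{1}_{[w]}\,d\Emp^{\rm log}_{\rm nrm}(x,N)\longrightarrow \mu([w])\qquad\text{for every }k\geq 1\text{ and every }w\in\{0,1\}^{k}.
\]

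The second step is to identify each of these integrals with a logarithmic-density quantity. With $S$ the left shift one has $(S^{n-1}x)_i=x_{n-1+i}$, so $S^{n-1}(x)\in[w]$ iff $x_n=w_1,\ldots,x_{n+k-1}=w_k$, i.e.\ iff $n\in J_w$. Hence
\[
\int \mathbbm{1}_{[w]}\,d\Emp^{\rm log}_{\rm nrm}(x,N)=\frac{1}{H_N}\sum_{n\in J_w\cap[1,N]}\frac{1}{n},
\]
and since $H_N=\log N+O(1)$, the right-hand side tends to $\mu([w])$ exactly when the logarithmic density of $J_w$ exists and equals $\mu([w])$, i.e.\ $\un\delta(J_w)=\ov\delta(J_w)=\mu([w])$. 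Combining the two equivalences yields the proposition.

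The argument is essentially mechanical and no single step is a serious obstacle; the delicate input (Stone--Weierstrass density of cylinder indicators in $C(\{0,1\}^{\N})$) is classical. The only points to handle with some care are the indexing bookkeeping that converts $S^{n-1}x\in[w]$ into $n\in J_w$, and the comparison $H_N\sim\log N$ which allows passage between the normalized measure $\Emp^{\rm log}_{\rm nrm}(x,N)$ and the unnormalized logarithmic density of $J_w$. Both are routine, which is precisely why the authors remark that the proof parallels the Cesàro case verbatim.
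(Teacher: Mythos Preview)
Your proposal is correct and is precisely the standard block-frequency argument the paper alludes to; the paper itself gives no proof, merely remarking that it ``can be proved the same way as for Ces\`aro averages.'' Your write-up supplies exactly those details (Stone--Weierstrass on the Cantor space, the identification $S^{n-1}x\in[w]\iff n\in J_w$, and the comparison $H_N\sim\log N$), so there is nothing to add.
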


We will also apply the following observation.

\begin{Prop}
For every point $x=(x_n)\in \{0,1\}^\N$,  $k \ge 1$ and  finite block
$w\in\{0,1\}^k$, we have
$$\un{d}(J_w)=\min\{\nu(\{y:y_{[0,k)}=w\}):\:\nu\in V(x)\}$$
and
$$\ov{d}(J_w)=\max\{\nu(\{y:y_{[0,k)}=w\}):\:\nu\in V(x)\},$$
where $J_w=\{j\in\N: x_{j}=w_1,\ldots, x_{j+k-1}=w_k\}$.
\end{Prop}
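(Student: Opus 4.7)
The plan is to express the densities of $J_w$ as $\liminf$ and $\limsup$ of integrals of a single continuous function against the empirical measures $\Emp(x,N)$, and then derive both equalities from weak-$\ast$ compactness of $M(X)$. The key observation is that the cylinder $[w]:=\{y\in\{0,1\}^\N:y_{[0,k)}=w\}$ is clopen, so $f:=\mathbbm{1}_{[w]}$ is continuous. Since $S^{n-1}x\in[w]$ iff the block $w$ appears in $x$ starting at position $n$, the quantity $\int f\,d\Emp(x,N)=\tfrac1N\sum_{n=1}^Nf(S^{n-1}x)$ equals the proportion of positions in $[1,N]$ belonging to $J_w$, up to a boundary error of order $k/N$. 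Hence
$$
\un d(J_w)=\liminf_{N\to\infty}\int f\,d\Emp(x,N),\qquad \ov d(J_w)=\limsup_{N\to\infty}\int f\,d\Emp(x,N).
$$

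Next, I would prove one inequality in each equality by using $V(x)$ as limit points. For any $\nu\in V(x)$, pick a subsequence $(N_\ell)$ with $\Emp(x,N_\ell)\to\nu$ in the weak-$\ast$ topology. By continuity of $f$,
$$
\nu([w])=\lim_{\ell\to\infty}\int f\,d\Emp(x,N_\ell)\in\bigl[\un d(J_w),\ov d(J_w)\bigr],
$$
which yields $\un d(J_w)\le\min_{\nu\in V(x)}\nu([w])$ and $\max_{\nu\in V(x)}\nu([w])\le\ov d(J_w)$.

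For the reverse inequalities I would extract subsequences realising the extremes of the densities. Take $(N_\ell)$ with $\int f\,d\Emp(x,N_\ell)\to\un d(J_w)$, and, using weak-$\ast$ compactness of $M(X)$, pass to a further subsequence along which $\Emp(x,N_\ell)\to\nu$ for some $\nu$; this $\nu$ is automatically in $V(x)$, and the identity of the first paragraph gives $\nu([w])=\un d(J_w)$, so the minimum is attained. The symmetric argument applied to a sequence realising $\ov d(J_w)$ handles the maximum. I do not anticipate any essential obstacle: the argument uses only that cylinders in $\{0,1\}^\N$ are clopen (making their indicators continuous test functions) together with weak-$\ast$ compactness of $M(X)$, so it runs parallel to the standard proof that $\un d(J_w)$ and $\ov d(J_w)$ are realised by Ces\`aro frequencies along suitable subsequences.
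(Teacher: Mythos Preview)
Your argument is correct and is the standard one: the indicator of the clopen cylinder $[w]$ is a continuous test function, so weak-$\ast$ convergence controls $\nu([w])$, and the two inequalities fall out of the definition of $V(x)$ together with sequential compactness of $M(X)$. The paper itself does not give a proof of this proposition (it simply declares both results ``well-known''), so there is nothing to compare against; your write-up is exactly the expected justification. One minor remark: with the indexing conventions in the paper, $\int \mathbbm{1}_{[w]}\,d\Emp(x,N)$ equals $|J_w\cap[1,N]|/N$ exactly (or up to a shift by one position, hence an $O(1/N)$ error), so the ``boundary error of order $k/N$'' is not actually needed---the sequence $x$ is infinite, and nothing is truncated at the right end.
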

Both results are well-known. We now present our examples.

\begin{Prop} It can happen that $V^{\rm log}(x)\subsetneq V(x)$.
\end{Prop}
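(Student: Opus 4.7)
The plan is to construct $x\in\{0,1\}^\N$ as the concatenation of alternating homogeneous blocks $B_1,B_2,\ldots$ of geometrically growing length: for $k\ge 1$, the block $B_k$ has length $L_k=2^k$ and is filled with $0$ if $k$ is odd and with $1$ if $k$ is even. Write $N_k=L_1+\ldots+L_k=2^{k+1}-2$ for the right endpoint of $B_k$. The guiding heuristic is that long homogeneous blocks push the empirical measure towards convex combinations of the two fixed-point masses $\delta_{0^\infty}$ and $\delta_{1^\infty}$, whereas logarithmic averaging assigns each block (asymptotically) the same weight $\log 2$ and therefore erases the oscillations visible in the Ces\`aro proportions.

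To see that $V(x)$ is not a singleton I would first note that, for any cylinder $[c]$ of length $m$, the number of indices $n\in B_j$ with $S^{n-1}(x)\in[c]$ equals $L_j-m+1$ if $c$ matches the constant symbol of $B_j$ and vanishes otherwise, up to $O(1)$ boundary corrections. A direct count then gives that at $N_k$ with $k$ even the fraction of indices lying inside $1$-blocks is $(4+16+\ldots+2^k)/N_k\to 2/3$, while for $k$ odd the analogous ratio tends to $1/3$. Hence $\Emp(x,N_k)$ converges along the two parities to $\mu_+:=\tfrac23\delta_{1^\infty}+\tfrac13\delta_{0^\infty}$ and $\mu_-:=\tfrac13\delta_{1^\infty}+\tfrac23\delta_{0^\infty}$ respectively, so both measures lie in $V(x)$.

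For $V^{\log}(x)$ the key computation is that, for $j$ large, $\sum_{n\in B_j}n^{-1}\approx\log(N_j/N_{j-1})\to\log 2$, so every block contributes the same asymptotic log-mass $\log 2$, independently of its constant symbol. Dividing by $\log N$ shows that the log-frequencies of $[0^m]$ and $[1^m]$ both tend to $1/2$, while any cylinder containing both symbols occurs only at the $O(k)$ block boundaries and contributes total log-mass $O\bigl(\sum_j 2^{-j}\bigr)=O(1)$, which vanishes against the denominator $\log N_k\to\infty$. This identifies $V^{\log}(x)=\{\mu^*\}$ with $\mu^*:=\tfrac12\delta_{0^\infty}+\tfrac12\delta_{1^\infty}$, and the strict inclusion $V^{\log}(x)\subsetneq V(x)$ then follows from $\mu^*\neq\mu_\pm$.

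The hard step is the singleton conclusion for $V^{\log}(x)$, which demands uniform control at intermediate times $N=N_k+t$ with $0\le t\le L_{k+1}$: one has to check that the partial-block contribution $\log((N_k+t)/N_k)\in[0,\log 2]$ stays bounded while the completed-block contribution $\approx k\log 2\approx\log N_k$ dominates. This boundedness of the current-block log-mass is exactly the mechanism by which logarithmic averaging smooths out the $1/3$--$2/3$ Ces\`aro oscillations, and so is the conceptual crux of the example.
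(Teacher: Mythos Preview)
Your construction is essentially the paper's own example (alternating constant blocks of lengths $2^k$ in $\{0,1\}^{\N}$), and your analysis matches it: the Ces\`aro proportion of $1$'s oscillates between $1/3$ and $2/3$ at the block endpoints, while each block carries log-mass $\approx\log 2$, so logarithmic averaging collapses $V^{\rm log}(x)$ to the single measure $\mu^\ast=\tfrac12\delta_{0^\infty}+\tfrac12\delta_{1^\infty}$.

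One point needs tightening. From $V^{\rm log}(x)=\{\mu^\ast\}$ and $\mu_\pm\in V(x)$ you conclude the strict inclusion $V^{\rm log}(x)\subsetneq V(x)$ ``since $\mu^\ast\neq\mu_\pm$'', but that only yields $V^{\rm log}(x)\neq V(x)$; you still owe the inclusion $\mu^\ast\in V(x)$. This is not automatic: a later example in the paper shows $V(x)\cap V^{\rm log}(x)$ can be empty. The fix is short and uses ingredients you already have: your boundary count shows every $\nu\in V(x)$ is supported on $\{0^\infty,1^\infty\}$, hence $V(x)$ lies on the one-dimensional segment $\{\alpha\delta_{0^\infty}+(1-\alpha)\delta_{1^\infty}:0\le\alpha\le1\}$; since $V(x)$ is connected and contains $\mu_-$ and $\mu_+$, it contains the entire interval between them, in particular $\mu^\ast$. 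With this line added, your argument and the paper's coincide.
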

\begin{proof}
Let
$$x=0 11 0000 1111 11111 0000 0000 0000 0000 \ldots,$$
that is, 
$$
x_n=\begin{cases}
      0, & \mbox{if }2^{2k}\le n+1<2^{2k+1}\text{ for some }k\ge 0,  \\
      1, & \mbox{if }2^{2k-1}\le n+1<2^{2k}\text{ for some }k\ge 1.
    \end{cases}
$$
It is easy to see that
$V(x)=\{\alpha\delta_{\ov{0}}+(1-\alpha)\delta_{\ov{1}}:1/3\le\alpha\le 2/3\}$, where $\delta_{\ov{p}}$ denotes the Dirac measure concentrated at the fixed point $ppp\ldots\in\{0,1\}^\N$. We claim that $V^{\rm log}(x)=\{1/2\delta_{\ov{0}}+1/2\delta_{\ov{1}}\}$. Indeed, it is easy to see that:
\begin{enumerate}
\item $\un{\delta}(J_w)=\ov{\delta}(J_w)=1/2$ for $w=0$ and $w=1$,
\item $\un{d}(J_w)=\ov{d}(J_w)=0$ for any block $w$ containing $01$ or $10$ as a subblock,
\item $\un{\delta}(J_w)=\ov{\delta}(J_w)=1/2$ for any $k\ge 2$ and $w=0^k$ or $w=1^k$.
\end{enumerate}
Our claim is an immediate consequence of these three observations. The proofs of the first two are based on easy computations. To see the third one, fix $k\ge 2$ and consider $w=1^k$ (the case $w=0^k$ is proved in the same way). Note that $J_1\setminus J_w$ can be equivalently described as the set of positions $j$ such that the block $x_jx_{j+1}\ldots x_{j+k-1}$ starts with $1$ and contains $10$ as a subblock. By 2.~this set satisfies $\ov{d}(J_1\setminus J_w)=0$, thus $\un{\delta}(J_w)=\un{\delta}(J_1)$ and $\ov{\delta}(J_w)=\ov{\delta}(J_1)$. It follows from~1.\ that $\delta(J_w)=1/2$.\end{proof}

We are grateful to J.\ Ku\l aga-Przymus for the following remark. It has also inspired our next proposition presenting a simpler example of the same phenomenon.
\begin{Remark}[J.\ Ku\l aga-Przymus] It is implicit in \cite{Ba-Ka-Ku-Le} that for each $\mathscr{B}\subset\N\setminus\{1\}$ which is not Besicovitch, for the subshift $(X_{\eta},S)$, where $\eta:=\raz_{\cf_{\mathscr{B}}}$ ($\cf_{\mathscr{B}}$ stands for the set of $\mathscr{B}$-free numbers), we have $V^{\rm log}(\eta)=\{\nu_{\eta}\}$ and $\nu_\eta$ (so called Mirsky measure of $(X_\eta,S)$) is ergodic, while $V(\eta)$ is uncountable, whence the set of ergodic components of members in $V(\eta)$ is strictly larger than the analogous set for $V^{\rm log}(\eta)$.
\end{Remark}
\begin{Prop} The set of ergodic measures appearing in the ergodic decompositions of members of $V(x)$ can be strictly bigger than the set of ergodic components of analogous set for $V^{\rm log}(x)$.
\end{Prop}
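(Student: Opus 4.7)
The plan is to exhibit a point $x\in\{0,1\}^{\N}$ of the form
\[
x = 0^{a_1} 1^{b_1} 0^{a_2} 1^{b_2} \cdots,
\]
with super-exponentially growing blocks $a_k = b_k = 2^{k^2}$. Writing $S_k = \sum_{j\le k}(a_j + b_j)$, I would establish two facts: (i) $\frac{1}{2}\delta_{\ov{0}}+\frac{1}{2}\delta_{\ov{1}}\in V(x)$, so that $\delta_{\ov{1}}$ occurs as an ergodic component of some member of $V(x)$; (ii) $V^{\rm log}(x)=\{\delta_{\ov{0}}\}$, whose only ergodic component is $\delta_{\ov{0}}$. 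Together these yield strict containment of the two sets of ergodic components, which is exactly the claim.

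For (i) I would evaluate $\Emp(x,S_k)$ as $k\to\infty$. Since $a_k=b_k$ and $S_{k-1}=o(a_k)$, for each fixed $\ell\ge 1$ the empirical mass of each of the cylinders $[0^\ell]$ and $[1^\ell]$ tends to $1/2$, while the mass of any cylinder $[w]$ with $w\in\{0,1\}^\ell$ containing both symbols is carried only by the $O(k\ell)$ transition positions and is thus $o(1)$. This gives $\Emp(x,S_k)\to \frac{1}{2}\delta_{\ov{0}}+\frac{1}{2}\delta_{\ov{1}}\in V(x)$, whose ergodic decomposition involves $\delta_{\ov{1}}$.

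For (ii) I would apply the logarithmic-genericity criterion from the preceding Proposition. Any block $w$ containing at least one $1$ satisfies $J_w\subset \bigcup_{i=0}^{|w|-1}(J_1-i)$, so it is enough to show $\ov\delta(J_1)=0$. The $k$-th maximal $1$-run is the interval $[S_{k-1}+a_k+1, S_k]$, contributing
\[
\sum_{n\in J_1\cap (S_{k-1}+a_k,\,S_k]}\frac{1}{n} = \log\frac{S_k}{S_{k-1}+a_k}+o(1) = \log 2 + o(1)
\]
to the harmonic sum (using $S_{k-1}=o(a_k)$ and $b_k=a_k$). Summing over $k$ yields a logarithmic mass of $J_1\cap[1,S_k]$ comparable to $k\log 2$, while $H_{S_k}\sim\log S_k\sim k^2\log 2$, so the logarithmic density of $J_1$ along $(S_k)$ is of order $1/k$; a routine monotonicity estimate between consecutive $S_k$ extends this to $\ov\delta(J_1)=0$. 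Consequently $\delta(J_{0^\ell})=1$ for every $\ell$, and the criterion identifies $V^{\rm log}(x)$ with $\{\delta_{\ov{0}}\}$.

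The only delicate step is the simultaneous calibration of the two growth rates: the condition $S_{k-1}=o(a_k)$ is what forces the Ces\`aro average at $S_k$ to be the balanced measure $\frac{1}{2}(\delta_{\ov{0}}+\delta_{\ov{1}})$, while the super-exponential regime $\log a_k/k\to\infty$ is what ensures that each $1$-block contributes only $\log 2 = O(1)$ to the harmonic sum, negligible against the total $\log S_k$. Both requirements are met by the choice $a_k = b_k = 2^{k^2}$, and nothing else in the argument requires more than straightforward harmonic and counting estimates.
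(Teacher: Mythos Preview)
Your argument is correct and follows essentially the same route as the paper: both construct a point in $\{0,1\}^{\N}$ with super-exponentially growing blocks so that each $1$-run contributes only $\log 2+o(1)$ to the harmonic sum (yielding $\ov\delta(J_1)=0$ and hence $V^{\rm log}(x)=\{\delta_{\ov 0}\}$), while along a suitable sequence the Ces\`aro density of $1$'s stays near $1/2$, producing a measure in $V(x)$ with $\delta_{\ov 1}$ as an ergodic component. The only cosmetic difference is the specific sequence---the paper places the $1$-blocks at $[2^{k^2-1},2^{k^2})$ rather than taking $a_k=b_k=2^{k^2}$---and the paper invokes \cite{Lu-Po} for the density computation where you compute directly.
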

\begin{proof}
Let $x=(x_n)\in\{0,1\}^\N$, where
$$
x_n=\begin{cases}
      1, & \mbox{if }2^{k^2-1}\le n+1<2^{k^2},\text{ for some }k\ge 1,  \\
      0, & \mbox{\text{otherwise}}.
    \end{cases}
$$
Then it follows either from \cite[Lemma 2]{Lu-Po} or from direct computations that
$\ov{\delta}(J_1)=\ov{\delta}(\{0\le j<2^{k^2}:x_j=1\})=0$, which implies that $V^{\rm log}(x)=\{\delta_{\ov{0}}\}$. On the other hand for every $k\ge 1$ we have
\[
|\{0\le j<2^{k^2}:x_j=1\}|=\sum_{j=1}^k 2^{j^2-1}.
\]
Therefore $d(\{0\le j<2^{k^2}:x_j=1\})=1/2$ and we conclude that $V(x)\neq\{\delta_{\ov{0}}\}$.
\end{proof}

\begin{Prop} The sets $V(x)$ and $V^{\rm log}(x)$ can be disjoint.
\end{Prop}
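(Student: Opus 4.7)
The plan is to build a point $x\in\{0,1,2\}^{\N}$ whose blocks, of exponentially growing length, cycle through the three symbols $0,1,2$. The growth rate will be slow enough (just $2^k$) that $V^{\rm log}(x)$ collapses to the single ``centre'' measure $c:=\tfrac13(\delta_{\ov 0}+\delta_{\ov 1}+\delta_{\ov 2})$, and yet fast enough that $V(x)$ is a non-convex closed curve in the $2$-simplex generated by $\delta_{\ov 0},\delta_{\ov 1},\delta_{\ov 2}$ and avoiding $c$. Since Proposition~\ref{p:col1} always forces $V^{\rm log}(x)\subset\ov{\rm conv}(V(x))$, disjointness requires $V(x)$ to be non-convex, which is precisely why three symbols are needed: the analogous two-symbol constructions of the previous propositions yield $V(x)$ contained in a segment, hence convex.

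Concretely, take $L_k:=2^k$ and $a_k\in\{0,1,2\}$ with $a_k\equiv k-1\pmod 3$, and let $x=a_1^{L_1}a_2^{L_2}a_3^{L_3}\cdots$, writing $N_k:=\sum_{j=1}^k L_j = 2^{k+1}-2$. For $N=N_{k-1}+tL_k$ with $t\in[0,1]$, counting symbol occurrences using $L_j/L_k=2^{j-k}$ and the geometric sum $\sum_{j\ge 0}2^{-3j}=\tfrac{8}{7}$ will produce, in the limit $k\to\infty$ along each residue class modulo $3$, an explicit formula for $\Emp(x,N)$. This will exhibit $V(x)$ as the union of three line segments joining three ``corner'' measures $p_0,p_1,p_2$ (cyclic permutations of $\tfrac{4}{7}\delta_{\ov 0}+\tfrac{1}{7}\delta_{\ov 1}+\tfrac{2}{7}\delta_{\ov 2}$), each segment characterised by a fixed $2{:}1$ ratio between two of the three coordinates.

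For $V^{\rm log}(x)$, the harmonic mass of block $j$ is $H_{N_j}-H_{N_{j-1}}\to\log 2$ while the total mass up to $N_k$ grows like $k\log 2$, so each block contributes asymptotic fraction $1/k$ to the logarithmic empirical measure. Because the sequence $(a_j)$ has period $3$, these logarithmic averages converge to $c$; since any partially completed current block contributes only $O(1/k)$, the limit is independent of the subsequence and $V^{\rm log}(x)=\{c\}$.

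Disjointness will then follow by showing $c\notin V(x)$: on the segment from $p_0$ to $p_1$ the coordinates of $\delta_{\ov 0}$ and $\delta_{\ov 2}$ remain in the fixed ratio $2{:}1$ and therefore cannot simultaneously equal $\tfrac{1}{3}$; cyclic symmetry closes the argument. I expect the main obstacle to be the combinatorial bookkeeping producing the explicit parametric formula for the three arcs, together with the standard verification that for $n$ deep inside a block of $a_j$'s the Dirac mass $\delta_{S^{n-1}x}$ is well approximated by $\delta_{\ov{a_j}}$, which is the only place where the divergence $L_j\to\infty$ is used.
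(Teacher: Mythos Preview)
Your construction is correct and follows essentially the same approach as the paper: the paper uses blocks whose lengths grow by a factor of $3$ (so the corner measures have coordinates $(\tfrac1{13},\tfrac3{13},\tfrac9{13})$ instead of your $(\tfrac17,\tfrac27,\tfrac47)$), but otherwise the idea---three symbols, exponentially growing blocks cycling through them, so that $V^{\rm log}(x)=\{c\}$ while $V(x)$ is the boundary of a triangle not containing $c$---is identical. The paper's verification that $c\notin V(x)$ is also the same in spirit (it shows that for any $\nu\in V(x)$ one coordinate is at most $\tfrac3{13}<\tfrac13$, which is exactly your fixed-ratio observation), and the full description of $V(x)$ as a triangle boundary that you sketch appears in the paper only as a subsequent remark.
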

\begin{proof}
Let
$$x=00 111111 222222222222222220000000000000\ldots,$$
that is, 
$$
x_n=\begin{cases}
      0, & \mbox{if }3^{3k}\le n+1<3^{3k+1}\text{ for some }k\ge 0,  \\
      1, & \mbox{if }3^{3k+1}\le n+1<3^{3k+2}\text{ for some }k\ge 0, \\
      2, & \mbox{if }3^{3k+2}\le n+1<3^{3k+3}\text{ for some }k\ge 0.
    \end{cases}
$$
We claim that $V^{\rm log}(x)=\{1/3\delta_{\ov{0}}+1/3\delta_{\ov{1}}+1/3\delta_{\ov{2}}\}$ and $V^{\rm log} (x)\cap V(x)=\emptyset$.
We proceed as in the first example. First, we note that
the asymptotic density of the set $J_w$ of appearances of a block $w=w_1w_2\in\{0,1,2\}^2$ with $w_1\neq w_2$ in $x$ is zero. It follows that the support of every measure $\nu\in V(x)\cup V^{\rm log}(x)$ is contained in the set $\{\ov{0},\ov{1},\ov{2}\}$ consisting of three shift-invariant (fixed) points.
Next we compute (or conclude from \cite[Lemma 2]{Lu-Po} ) that
\[
\delta(\{j\in\N:x_j=0\})=\delta(\{j\in\N:x_j=1\})=\delta(\{j\in\N:x_j=2\})=1/3.
\]
This shows that $V^{\rm log}(x)=\{1/3\delta_{\ov{0}}+1/3\delta_{\ov{1}}+1/3\delta_{\ov{2}}\}$.

Let $\nu\in V(x)$. It follows that there is a sequence $(N_i)$ such that for every $k\ge 1$ and every block  $w\in\{0,1,2\}^k$, we have
\[
\lim_{i\to\infty}\frac{|\{0\le j< N_i: x_{j}=w_1,\ldots, x_{j+k-1}=w_k\}|}{N_i} =
\nu(\{y:y_{[0,k)}=w\}).
\]
Without loss of generality we can assume that there exists $p\in\{0,1,2\}$ such that $N_i \equiv p \bmod 3$ for every $i\in\N$. Let $q= p-1\bmod 3$.
It is then easy to see that
\[
 \nu(\{y:y_0=q\})\le \frac{3}{13}<\frac13,
\]
which implies that $\nu\neq 1/3\delta_{\ov{0}}+1/3\delta_{\ov{1}}+1/3\delta_{\ov{2}}$.
\end{proof}
\begin{Remark}
With some more effort in can be seen that in the above example we have that
\begin{multline*}
  \ov{\rm conv}(V(x))=\ov{\rm conv}
  \bigg(
  \big\{
    \frac1{13}\delta_{\ov{0}}+\frac3{13}\delta_{\ov{1}}+\frac9{13}
    \delta_{\ov{2}},\,\frac1{13}\delta_{\ov{2}}+\frac3{13}
    \delta_{\ov{0}}+\frac9{13}\delta_{\ov{1}},\\
    \frac1{13}\delta_{\ov{1}}+\frac3{13}\delta_{\ov{2}}+
    \frac9{13}\delta_{\ov{0}}
  \big\}
  \bigg),
\end{multline*}
and $V(x)$ is the combinatorial boundary of that simplex, that is,
\begin{multline*}
V(x)=\ov{\rm conv}
  \bigg(
  \big\{
    \frac1{13}\delta_{\ov{0}}+\frac3{13}
    \delta_{\ov{1}}+\frac9{13}\delta_{\ov{2}},\,
    \frac1{13}\delta_{\ov{1}}+\frac3{13}
    \delta_{\ov{2}}+\frac9{13}\delta_{\ov{0}}
  \big\}
  \bigg)\\ \cup  \ov{\rm conv}
  \bigg(
  \big\{
    \frac1{13}\delta_{\ov{2}}+\frac3{13}
    \delta_{\ov{0}}+\frac9{13}\delta_{\ov{1}},\,
    \frac1{13}\delta_{\ov{1}}+\frac3{13}
    \delta_{\ov{2}}+\frac9{13}\delta_{\ov{0}}
  \big\}
  \bigg)\\ \cup\ov{\rm conv}
  \bigg(
  \big\{
    \frac1{13}\delta_{\ov{0}}+\frac3{13}
    \delta_{\ov{1}}+\frac9{13}\delta_{\ov{2}},\,
    \frac1{13}\delta_{\ov{2}}+\frac3{13}\delta_{\ov{0}}+
    \frac9{13}\delta_{\ov{1}}
  \big\}
  \bigg).
\end{multline*}
\end{Remark}


\begin{thebibliography}{9}
\bibitem{Ba-Ka-Ku-Le}
 A. Bartnicka, S.\ Kasjan, J.\ Ku\l aga-Przymus, M. Lema\'nczyk, {\em $\mathscr{B}$-free sets and dynamics}, to appear in Trans. Amer. Math. Soc., arXiv 1509.08010.
\bibitem{Ab-Ku-Le-Ru}
E.~H. El~Abdalaoui, J.~Ku\l{}aga-Przymus,
  M.~Lema{\'n}czyk, and T.~de~la Rue, \emph{The {C}howla and the {S}arnak
  conjectures from ergodic theory point of view}, Discrete Contin.\ Dyn.\ Syst.\
  \textbf{37} (2017), no.~6, 2899--2944.
\bibitem{De-Gr-Si} M. Denker, C. Grillenberger, and K. Sigmund, {\em Ergodic theory on compact spaces},
Lecture Notes in Mathematics, Vol. 527, Springer-Verlag, Berlin-New York,
1976.
\bibitem{Do}T.\ Downarowicz, {\em Entropy in Dynamical Systems},  Cambridge University Press 2011.
\bibitem{Fe-Ku-Le}S. Ferenczi, J. Ku\l aga-Przymus, M.\ Lema\'nczyk, {\em Sarnak's conjecture - what's new}, preprint, arXiv:1710.04039v1.
\bibitem{Fr}N.~Frantzikinakis, {\em Ergodicity of the Liouville system implies the Chowla conjecture}, preprint, https:
//arxiv.org/abs/1611.09338.
\bibitem{Fr-Ho1}
N.~Frantzikinakis and B.~Host, \emph{The logarithmic {S}arnak conjecture for
  ergodic weights}, preprint, \url{https://arxiv.org/abs/1708.00677}.
\bibitem{Lu-Po} F. Luca, \v{S}. Porubsk\'y, {\em On asymptotic and logarithmic densities}, Tatra Mt. Math. Publ. \textbf{31} (2005), 75--86. 
\bibitem{Ph}R.~R.~Phelps, {\em Lectures on Choquet's Theorem}, D. Van Nostrand Co., Inc., Princeton, New Jersey, Toronto New York London, 1966.
\bibitem{Ru}W.\ Rudin, {\em Functional Analysis}, second edition, McGrew-Hill Inc., 1991.
\bibitem{Sa}P.\ Sarnak,
 \emph{Three lectures on the {M}{\"o}bius function, randomness and
  dynamics}, \url{http://publications.ias.edu/sarnak/}.
\bibitem{Ta3}
T.~Tao, \emph{The logarithmically averaged {C}howla and {E}lliot conjectures
  for two-point correlations}, Forum Math. Pi \textbf{4} (2016).
\bibitem{Ta}T.\ Tao, \emph{Equivalence of the logarithmically averaged {C}howla and
  {S}arnak conjectures}, Number Theory -- {D}iophantine Problems, Uniform
  Distribution and Applications: Festschrift in Honour of Robert F. Tichy's
  60th Birthday (C.~Elsholtz and P.~Grabner, eds.), Springer International
  Publishing, Cham, 2017, pp.~391--421.
    \end{thebibliography}
\end{document}